\def\Var{\mathrm{Var}}
\def\Assos{\mathrm{Assos}}
\def\Alt{\mathrm{Alt}}
\def\dim{\mathrm{dim}}
\def\Der{\mathrm{Der}}
\def\Nov{\mathrm{Nov}}
\def\Com{\mathrm{Com}}
\def\As{\mathrm{As}}
\def\Bi{\mathrm{Bi}}
\def\Zinb{\mathrm{Zinb}}
\def\wt{\mathrm{wt}}
\def\Der {\mathop {\fam 0 Der}\nolimits}
\newtheorem{definition}{Definition}
\newtheorem{lemma}{Lemma}
\newtheorem{proposition}{Proposition}
\newtheorem{theorem}{Theorem}
\newtheorem{corollary}{Corollary}
\title{White Manin product and Hadamard product}
\author{P. S. Kolesnikov}
\address{Sobolev Institute of Mathematics, Novosibirsk (Russia)}
\email{pavelsk77@gmail.com}
\author{B. K. Sartayev$^{*}$}
\address{Narxoz University, Almaty, Kazakhstan}
\email{baurjai@gmail.com}
\keywords{Alternative algebra, assosymmetric algebra, bicommutative algebra, operad, free algebra}
\subjclass[2020]{17A30, 17A50, 16R10}
\thanks{${}^{*}$Corresponding author: Bauyrzhan Sartayev   (baurjai@gmail.com)}
\begin{document}
    
\maketitle

\begin{abstract}
In this paper, we consider three types of operads: alternative, assosymmetric, and bicommutative. We prove that the Hadamard product of these operads with the Novikov operad coincides with their white Manin product. As an application, we identify a variety of algebras in which all algebras are special.
\end{abstract}

\section{Introduction}

The weight criterion for commutative-associative algebras with derivation plays an important role in constructing a basis of the free Novikov algebra \cite{DzhLofwall} and in proving that every algebra in the variety of Novikov algebras is special \cite{BCZ2017}. That is, every Novikov algebra can be embedded into appropriate associative-commutative algebra with a derivation $d$.

Let us recall the crucial definition of the weight function:
\begin{definition}
Let $\wt$ be a weight function defined on monomials of $\Var\<X^{(d)}\>$ as follows:
\[
\wt:\Var\<X^{(d)}\>\rightarrow \mathbb{Z},
\]
\[
\wt(x^{(j)}) =j-1,\quad x\in X,\ j\ge 0,
\]
\[
\wt(uv) = \wt(u)+\wt(v)
\]
for all monomials $u,v\in \Var\<X^{(d)}\>$, where $\Var^{(d)}$denotes the variety of algebras $\Var$ equipped with a derivation $d$, and $\Var\<X^{(d)}\>$ is the corresponding free algebra generated by a countable set $X$.
\end{definition}
If we define a new operation $\prec$ in a commutative-associative algebra with a derivation $d$ as 
\[
a\prec b=ad(b),
\]
then the resulting algebra with the operation $\prec$ forms a Novikov algebra. Moreover, every monomial in $\Com\<X^{(d)}\>$ of the weight $-1$ can be expressed using the operation $\prec$. As stated above, under the functor 
\[
\tau(a\prec b)=ad(b)
\]
every Novikov algebra is special. More generally, for an arbitrary variety $\Var$, we can define new operations $\prec$ and $\succ$ on $\Var\<X^{(d)}\>$
as
\[
a\succ b=d(a)b\;\textrm{and}\;a\prec b=ad(b),
\]
and define a functor $\tau$ relative to these operations.

Novikov algebras play an important role in the theory of operads. We denote by $\Nov$ an operad derived from the variety of Novikov algebras.
Since there is a one-to-one correspondence between a variety of algebras $\Var$ and the quadratic operad derived from it, we will use the same terminology for both throughout this paper.
In \cite{KSO2019}, it was proved that for an arbitrary quadratic operad $\Var$, the free algebra $\Der\Var\<X\>$ obtained from the white Manin product of operads $\Var\circ\Nov:=\Der\Var$ can be embedded into $\Var\<X^{(d)}\>$ under the functor $\tau$. However, not every algebra of $\Der\Var$ can be embedded into some algebra of $\Var^{(d)}$, see \cite{KolMashSar}.

The condition that a monomial in $\Var\<X^{(d)}\>$ has weight $-1$ is necessary for expressing it in terms of $\succ$ and $\prec$. For example, every monomial of weight $-1$ in $\As\<X^{(d)}\>$ can be expressed by operations $\succ$ and $\prec$ \cite{erlagol2021}, where $\As$ is a variety of associative algebras. As an application, \cite{DauSar} provides a construction of a basis for the algebra $\Der\As\<X\>$ and establishes that
\[
\dim(\Der\As(n))=n!\binom{2n-2}{n-1}.
\]
However, it is not always sufficient. For example, in the variety of Zinbiel algebras, not every monomial of weight $-1$ in $\Zinb\<X^{(d)}\>$ belongs to $\Der\Zinb\<X\>$. We say that the variety of algebras $\Var$ satisfies the weight criterion if every monomial of weight $-1$ in $\Var\<X^{(d)}\>$ can be expressed using the operations $\succ$ and $\prec$.

If $\Var$ satisfies the weight criterion, then it has the following important properties:

\begin{lemma}\label{lem:Weight-criterion}
\cite{KolMashSar} Let $\Var $ be a binary operad such that $\Var\circ \Nov = \Var\otimes \Nov $.
Then for every set $X$ an element $f\in \Var\langle X^{(d)}\rangle $
belongs to $\Der\Var \langle X\rangle $ if and only if
$\wt (f)=-1$.
\end{lemma}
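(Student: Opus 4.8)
The plan is to reduce everything to the multilinear component in $n$ distinct variables and to conclude by a dimension count, the point being that the hypothesis $\Var\circ\Nov=\Var\otimes\Nov$ is precisely what forces the image of $\tau$ to exhaust the whole weight $-1$ subspace. Throughout I identify $\Der\Var\langle X\rangle$ with its image under the embedding $\tau\colon\Der\Var\langle X\rangle\hookrightarrow\Var\langle X^{(d)}\rangle$ of \cite{KSO2019}, writing the derived operations as $a\succ b=d(a)b$ and $a\prec b=ad(b)$. The necessity of $\wt(f)=-1$ I would obtain by an easy induction on the number of operations: from $\wt(x^{(j)})=j-1$ and the Leibniz rule each term of $d(u)$ has weight $\wt(u)+1$, so that both $\tau(a\succ b)=d(\tau a)\,\tau b$ and $\tau(a\prec b)=\tau a\,d(\tau b)$ have weight $\wt(\tau a)+\wt(\tau b)+1$. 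Since $\wt(x)=-1$ for a generator, the value $-1$ is preserved by every application of $\succ$ and $\prec$, so all of $\Der\Var\langle X\rangle$ is homogeneous of weight $-1$.

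For sufficiency I would analyze the multilinear-in-$x_1,\dots,x_n$ part of $\Var\langle X^{(d)}\rangle$. Because the free $\Var$-algebra-with-derivation is, as a plain $\Var$-algebra, free on the set $\{x_i^{(j)}:1\le i\le n,\ j\ge 0\}$, this part decomposes as $\bigoplus_{(j_1,\dots,j_n)}V_{(j_1,\dots,j_n)}$ over derivative-degree vectors, where $V_{(j_1,\dots,j_n)}$ is the span of the multilinear $\Var$-monomials on the distinct letters $x_1^{(j_1)},\dots,x_n^{(j_n)}$ and is therefore isomorphic to $\Var(n)$. A monomial in $V_{(j_1,\dots,j_n)}$ has weight $\sum_i(j_i-1)$, so the weight $-1$ part is exactly $\bigoplus_{\sum j_i=n-1}V_{(j_1,\dots,j_n)}$. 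The number of vectors with $j_i\ge 0$ and $\sum_i j_i=n-1$ is $\binom{2n-2}{n-1}=\dim(\Nov(n))$ (the Novikov operad dimension, \cite{DzhLofwall}), so this weight $-1$ component has dimension $\dim(\Var(n))\cdot\dim(\Nov(n))=\dim((\Var\otimes\Nov)(n))$.

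Now the hypothesis gives $\dim(\Der\Var(n))=\dim((\Var\circ\Nov)(n))=\dim((\Var\otimes\Nov)(n))$, which is exactly the dimension just computed. By the necessity part, $\tau$ sends the multilinear component of $\Der\Var\langle X\rangle$ injectively into the weight $-1$ multilinear component; two spaces of equal finite dimension related by an injection must coincide, so $\tau$ is onto and every multilinear weight $-1$ element lies in $\Der\Var\langle X\rangle$. To drop multilinearity I would observe that any weight $-1$ monomial is the image of a multilinear weight $-1$ monomial in a larger alphabet $X'$ under an identification $X'\to X$, and that such an identification is a $\Var^{(d)}$-homomorphism which preserves weight and carries $\Der\Var\langle X'\rangle$ into $\Der\Var\langle X\rangle$; hence every weight $-1$ element belongs to $\Der\Var\langle X\rangle$. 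I expect the main obstacle to be the structural input that $\Var\langle X^{(d)}\rangle$ is free as a $\Var$-algebra on $\{x_i^{(j)}\}$, which legitimizes the direct-sum decomposition with summands $\cong\Var(n)$, together with the identity $\binom{2n-2}{n-1}=\dim(\Nov(n))$ — this being the precise spot where the Novikov operad, and thus the hypothesis tying it to $\Var$, enters.
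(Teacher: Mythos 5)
The paper contains no proof of this lemma --- it is imported verbatim from \cite{KolMashSar} --- but your argument is correct and is essentially the one given in that reference: additivity of $\wt$ under $\succ$ and $\prec$ yields necessity, while sufficiency follows from the count $\dim(\Var(n))\cdot\binom{2n-2}{n-1}=\dim((\Var\otimes\Nov)(n))$ for the multilinear weight $-1$ component of $\Var\langle X^{(d)}\rangle$, which under the hypothesis equals $\dim(\Der\Var(n))$ and forces the injection $\tau$ of \cite{KSO2019} to be onto, with the general case recovered by specializing variables. No gaps; the two external inputs you rely on (freeness of $\Var\langle X^{(d)}\rangle$ as a plain $\Var$-algebra on the formal derivatives, and injectivity of $\tau$) are exactly the ones the cited proof also uses.
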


\begin{theorem}\label{thm:Weight-Embedding}
\cite{KolMashSar} If
$\Var $ is a binary operad such that
$\Var\circ \Nov = \Var\otimes \Nov $
then every $\Der\Var $-algebra is special.
\end{theorem}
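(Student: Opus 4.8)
The plan is to reduce the statement to a single ideal-intersection identity and then establish that identity using the weight grading together with Lemma~\ref{lem:Weight-criterion}. Write an arbitrary $\Der\Var$-algebra as a quotient $A=\Der\Var\langle X\rangle/I$ of a free $\Der\Var$-algebra by a $\Der\Var$-ideal $I$. Via the functor $\tau$ I identify $\Der\Var\langle X\rangle$ with its image in $R:=\Var\langle X^{(d)}\rangle$, which by Lemma~\ref{lem:Weight-criterion} is exactly the weight $-1$ homogeneous component $V_{-1}$ of the grading induced by $\wt$; thus $I\subseteq V_{-1}$. On $R$ the operations of $\Var$ preserve weight and $d$ raises it by one, so $R=\bigoplus_n V_n$ is a graded $\Var$-algebra with a degree-one derivation.

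Next I would construct the candidate envelope. Let $J$ be the $\Var$-ideal of $R$ generated by $\{d^k(g):g\in I,\ k\ge 0\}$; the Leibniz rule shows $J$ is $d$-stable, so $B:=R/J$ is a $\Var$-algebra with derivation, i.e. an object of $\Var^{(d)}$. The composite $V_{-1}\hookrightarrow R\twoheadrightarrow B$ is a morphism of $\Der\Var$-algebras (the projection is a $\Var^{(d)}$-homomorphism, hence carries $a\succ b=\mu(d(a),b)$ and $a\prec b=\mu(a,d(b))$ to the corresponding operations of $\tau(B)$), and it factors through $A=V_{-1}/I$. Everything therefore comes down to proving the single identity $J\cap V_{-1}=I$: if it holds, then $A\hookrightarrow\tau(B)$ and $A$ is special.

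The inclusion $I\subseteq J\cap V_{-1}$ is immediate. For the reverse inclusion I would argue as follows. Since the generators $d^k(g)$ are weight-homogeneous, $J$ is a graded ideal, so a weight $-1$ element of $J$ is a linear combination of $\Var$-monomials $w=C[d^k(g)]$, where one distinguished leaf equals $d^k(g)$ with $g\in I$ and the surrounding context $C$ is filled with monomials in the $x^{(j)}$. Introduce a fresh weight $-1$ generator $s$ and the two $\Var^{(d)}$-homomorphisms $\phi,\psi:\Var\langle(X\cup\{s\})^{(d)}\rangle\to R$ determined by $x\mapsto x$ together with $s\mapsto g$ (for $\phi$), respectively $s\mapsto 0$ (for $\psi$). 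Let $\tilde w=C[d^k(s)]$, so that $\phi(\tilde w)=w$ and $\psi(\tilde w)=0$. As $\phi,\psi$ preserve weight, $\tilde w$ has weight $-1$, whence Lemma~\ref{lem:Weight-criterion} (applied to the generating set $X\cup\{s\}$) places $\tilde w$ in $\Der\Var\langle X\cup\{s\}\rangle$. On this weight $-1$ part $\psi$ restricts to the $\Der\Var$-homomorphism sending $s$ to $0$, whose kernel is precisely the $\Der\Var$-ideal $\langle s\rangle$; since $\psi(\tilde w)=0$ we get $\tilde w\in\langle s\rangle$. Applying the $\Der\Var$-homomorphism $\phi$ then yields $w=\phi(\tilde w)\in\langle\phi(s)\rangle=\langle g\rangle\subseteq I$, because $I$ is a $\Der\Var$-ideal containing $g$. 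Summing over the constituent monomials gives $J\cap V_{-1}\subseteq I$, as required.

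The step I expect to be most delicate is this reverse inclusion, and within it the honest verification that the weight $-1$ part of the $\Var$-ideal $J$ is spanned by single-distinguished-generator monomials $C[d^k(g)]$ to which the fresh-generator substitution applies, together with the identification of $\ker(\psi|_{V_{-1}})$ with the $\Der\Var$-ideal $\langle s\rangle$. This is exactly the point where the hypothesis $\Var\circ\Nov=\Var\otimes\Nov$ enters in an essential way: without the equality $V_{-1}=\Der\Var\langle X\cup\{s\}\rangle$ furnished by Lemma~\ref{lem:Weight-criterion}, a weight $-1$ element of $R$ need not be a $\Der\Var$-expression at all, and the pullback argument would collapse. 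Once the identity $J\cap V_{-1}=I$ is secured, specialness of every $\Der\Var$-algebra follows formally from the embedding $A\hookrightarrow\tau(R/J)$.
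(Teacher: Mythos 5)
The paper does not prove Theorem~\ref{thm:Weight-Embedding} itself but imports it from \cite{KolMashSar}, and your argument is correct and follows essentially the same strategy as that source: present a $\Der\Var$-algebra as $\Der\Var\langle X\rangle/I$, identify $\Der\Var\langle X\rangle$ with the weight $-1$ component of $\Var\langle X^{(d)}\rangle$ via Lemma~\ref{lem:Weight-criterion}, form the $d$-stable $\Var$-ideal $J$ generated by $I$, and verify $J\cap V_{-1}=I$ using the weight grading. Your fresh-generator substitution correctly handles the only delicate point (that a weight $-1$ element of $J$ pulls back to the $\Der\Var$-ideal generated by the new variable), and the hypothesis $\Var\circ\Nov=\Var\otimes\Nov$ enters exactly where you say it must.
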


Let us list some works on alternative, assosymmetric, bicommutative, and Novikov algebras, as well as their associated operads. The dimensions of the assosymmetric operad and some examples are provided in \cite{assos1, assos2}. Results on the embedding of metabelian Novikov algebras into bicommutative algebras, as well as the invariant properties of bicommutative algebras, can be found in \cite{bicom2, bicom3, bicom4}. It is well known that alternative algebras, when equipped with the commutator, form Malcev algebras. However, a major open problem is whether every Malcev algebra can be embedded into an appropriate alternative algebra. Only partial results are currently known \cite{Meta-Mal, Binaryperm}. A recent study on Novikov algebras and operads can be found in \cite{MetNov}.

In this paper, we prove that the varieties of alternative, assosymmetric, and bicommutative algebras satisfy the weight criterion. In the final section, we present an application of our results.

\section{Bicommutative algebra}

An algebra is called bicommutative if it satisfies the following identities:
\begin{equation}\label{lcom}
a(bc)=b(ac)
\end{equation}
and
\begin{equation}\label{rcom}
(ab)c=(ac)b.
\end{equation}

\begin{theorem}\label{bicom-1}
Every monomial of weight $-1$ in $\Bi\Com\<X^{(d)}\>$ can be expressed by the operations $\prec$ and $\succ$.
\end{theorem}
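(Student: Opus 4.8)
The plan is to work inside the free bicommutative algebra $\Bi\Com\langle X^{(d)}\rangle$, regarded as the free bicommutative algebra on the alphabet of divided symbols $x^{(j)}$, and to peel a monomial apart one outer factor at a time using $\succ$ and $\prec$. First I would record the two structural consequences of the defining identities: by \eqref{lcom} the left multiplications commute, $L_aL_b=L_bL_a$, and by \eqref{rcom} the right multiplications commute, $R_aR_b=R_bR_a$, while $L$ and $R$ need not commute. From this one obtains a seed normal form: every monomial equals a word $O_1O_2\cdots O_{n-1}(x_a^{(j_a)})$ of left/right multiplications by letters applied to a seed letter, in which operators of the same type may be freely permuted. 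A monomial has $\wt=-1$ precisely when its $n$ leaves carry exactly $n-1$ derivatives in total.

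I would then induct on the degree $n$. Rewriting $u\succ v=d(u)\,v$ and $u\prec v=u\,d(v)$, I note that both operations raise the number of derivatives by exactly one, hence preserve weight $-1$. The inductive step splits $M$ at its top product $M=P\cdot Q$, where $\wt(P)+\wt(Q)=-1$. If the split can be arranged so that $\{\wt(P),\wt(Q)\}=\{0,-1\}$, say $\wt(Q)=0$ and $\wt(P)=-1$, then I borrow one derivative from $Q$: writing $\tilde Q$ for $Q$ with one derivative deleted from a chosen leaf, $\wt(\tilde Q)=-1$, and $P\prec\tilde Q=P\cdot d(\tilde Q)$ has leading term $M$. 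Both $P$ and $\tilde Q$ have weight $-1$ and smaller degree, so they lie in the $\succ,\prec$-subalgebra by induction, and $M$ is captured up to the remaining Leibniz terms; the symmetric case uses $\succ$.

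The arithmetic that makes this possible is a counting fact: since only $n-1$ derivatives are distributed over $n$ leaves, at most $(n-1)/2$ leaves carry two or more derivatives, so at least $(n+1)/2$ leaves carry at most one derivative. Exposing such a low-derivative leaf $y$ as an outermost single factor yields a top product with $\wt$-split inside $\{0,-1\}$: if $y$ carries no derivative it contributes weight $-1$, and if it carries exactly one it contributes weight $0$; in the latter case $d$ of the restored leaf is a single term, producing $M$ with no remainder at all.

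The hardest part is to guarantee that one of these low-derivative leaves can actually be brought to an outermost position by the identities \eqref{lcom} and \eqref{rcom}; not every leaf is so accessible (for instance the inner leaf $c$ in $a(bc)$, or the inner-left leaf $a$ in $(ab)c$, is frozen), so I must show that the set of leaves reachable as an outer single factor is always large enough---more than half---to meet the majority of low-derivative leaves. I expect to prove this by analyzing how \eqref{lcom} and \eqref{rcom} transport leaves along the left and right spines of the tree; for example the computation $(ab)(cd)=(a(cd))b=(c(ad))b=(cb)(ad)$ shows that bushy shapes are in fact very mobile, and I would bound the number of frozen leaves accordingly. The remaining routine point is the secondary induction disposing of the Leibniz remainder: every extra term keeps the same outer factor and merely relocates one derivative deeper, so ordering monomials by their derivative distribution makes the system unitriangular and lets the lower terms be absorbed using pieces already shown to lie in the subalgebra.
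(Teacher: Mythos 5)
Your overall architecture --- induction on degree, a normal form coming from the fact that left multiplications commute by \eqref{lcom} and right multiplications commute by \eqref{rcom}, peeling off an outer factor and absorbing the Leibniz corrections --- is close in spirit to the paper's argument, which works with the explicit basis \eqref{bicom-monomial} of the free bicommutative algebra. But the proposal has two genuine gaps, and you have flagged the first one yourself. Your plan hinges on showing that among the at least $(n+1)/2$ leaves carrying at most one derivative, at least one can be brought to an outermost position, and you only say you ``expect to prove this'' by bounding the number of frozen leaves. This is the crux, not a routine point: the pair $(n,m)$ recording how many leaves sit in the left-multiplication part versus the right-multiplication part of the normal form is an invariant of the element, so accessibility is genuinely constrained (in $a(bc)$ the leaf $c$ can never appear as an outer factor, and in $((ab)c)d$ the seed $a$ cannot). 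One can plausibly show that the $n$ generators $x_{i_1},\dots,x_{i_n}$ are all reachable as outer left factors and that the $x_{j_k}$ are reachable as outer right factors when $m\ge 2$, which would give the bound you need, but until this is proved the intersection argument with the low-derivative leaves does not close.

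The second gap is the termination of your secondary induction. When the exposed leaf $y$ carries no derivative you write $M=y\cdot Q=y\prec\tilde Q-\sum_i y\cdot\tilde Q^{+i}$, where the remainder terms have the same degree and the same weight $-1$ and differ from $M$ only by moving one derivative from one leaf of $Q$ to another. You assert that ordering monomials ``by their derivative distribution'' makes this system unitriangular, but you never name the order, and the natural candidates fail: if two leaves of $Q$ both carry the maximal derivative order (profile $(2,2,0,0)$, say), deleting from one and restoring onto the other produces the profile $(3,1,0,0)$, which is lexicographically larger on sorted profiles and leaves the number of derivated leaves unchanged, so neither of the obvious measures decreases. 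The paper avoids this trap by a different case split: if at least two leaves carry derivatives, all derivated leaves but one are pushed outside an inner bracket and the inductive hypothesis on degree is applied to the inner factor; if a single leaf carries all $n-1$ derivatives, the Leibniz corrections strictly increase the number of derivated leaves and therefore fall into the first case. You need an analogous well-founded measure (or a reduction of this kind) before the remainder argument can be accepted.
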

\begin{proof}
Let us prove this by induction on the length of monomials. For $n=2$, the result is obvious. For $n=3$, it suffices to prove the result for the monomials
\[
x_1(x_2'x_3'),\;x_1'(x_2x_3'),\;x_1'(x_2'x_3),\;x_1''(x_2x_3),\;x_1(x_2''x_3),\;x_1(x_2x_3'')
\]
due to the symmetry of identities (\ref{lcom}) and (\ref{rcom}). By (\ref{lcom}), we have
\[
x_1(x_2'x_3')=x_2'(x_1x_3')=x_2\succ(x_1\prec x_3),\; x_1'(x_2x_3')=x_1\succ(x_2\prec x_3),\; x_1'(x_2'x_3)=x_2\succ(x_1\succ x_3),
\]
\[
x_1''(x_2x_3)=x_2(x_1''x_3)=x_2\prec(x_1\succ x_3)-x_1\succ(x_2\prec x_3),
\]
\[
x_1(x_2''x_3)=x_1\prec(x_2\succ x_3)-x_2\succ(x_1\prec x_3)
\]
and
\[
x_1(x_2x_3'')=x_1\prec(x_2\prec x_3)-x_2\succ(x_1\prec x_3).
\]
Suppose the result holds for all monomials of length at most $n-1$. To prove the result for an arbitrary monomial of length $n$, we first recall the basis of the algebra $\Bi\Com\<X\>$ (see \cite{Ism}). This basis consists of monomials of the following form:
\begin{equation}\label{bicom-monomial}
x_{i_1}(\cdots (x_{i_{n-1}}((\cdots((x_{i_n} x_{j_1})x_{j_2})\cdots)x_{j_m}))\cdots),
\end{equation}
where $n$ and $m$ are any positive integers, $i_n\geq i_{n-1}\geq\ldots\geq i_1$ and $j_1\geq j_2\geq\ldots\geq j_m$.
It is worth noting that the generators $x_{i_1},x_{i_2},\ldots,x_{i_n}$ and $x_{j_1},x_{j_2},\ldots,x_{j_m}$ are rearrangeable in (\ref{bicom-monomial}), i.e.,

\begin{multline*}
x_{i_1}(\cdots (x_{i_{n-1}}((\cdots((x_{i_n} x_{j_1})x_{j_2})\cdots)x_{j_m}))\cdots)=\\
x_{\sigma{(i_1)}}(\cdots (x_{\sigma{(i_{n-1)}}}((\cdots((x_{\sigma{(i_m)}} x_{j_1})x_{j_2})\cdots)x_{j_m}))\cdots)=\\    
x_{i_1}(\cdots (x_{i_{n-1}}((\cdots((x_{i_n} x_{\rho(j_1)})x_{\rho(j_2)})\cdots)x_{\rho(j_m)}))\cdots),
\end{multline*}
where $\sigma\in S_{(i_1,\ldots,i_n)}$ and $\rho\in S_{(j_1,\ldots,j_m)}$.

So, the basis of $\Bi\Com\<X^{(d)}\>$ is the same set of monomials with generators
$$X^{(d)}=X\cup X'\cup X''\cdots\cup X^{(n)}\cup\cdots.$$
Let us consider $2$ cases.

Case 1: In (\ref{bicom-monomial}), suppose there are at least two generators with nonzero derivations. By applying identities (\ref{lcom}) and (\ref{rcom}), we always can rewrite (\ref{bicom-monomial}) in the form
\[
x_{l_1}^{(l_1')}(\cdots (x_{l_{p-1}}^{(l_{p-1}')}((\cdots(((x_{i_1}(\cdots (x_{i_{r-1}}((\cdots((x_{i_r}^{(t)} x_{j_1})x_{j_2})\cdots)x_{j_m}))\cdots))x_{k_1}^{(k_1')})x_{k_2}^{(k_2')})\cdots)x_{k_q}^{(k_q')}))\cdots),
\]
or equivalently,
\[
x_{l_1}^{(l_1')}(\cdots (x_{l_{p-1}}^{(l_{p-1}')}((\cdots(((x_{i_1}(\cdots (x_{i_{r-1}}((\cdots((x_{i_r} x_{j_1}^{(t)})x_{j_2})\cdots)x_{j_m}))\cdots))x_{k_1}^{(k_1')})x_{k_2}^{(k_2')})\cdots)x_{k_q}^{(k_q')}))\cdots).
\]
That is, all generators with derivations, except the chosen one, are placed outside the bracketed expression 
\[
x_{i_1}(\cdots (x_{i_{r-1}}((\cdots((x_{i_r} ^{(t)}x_{j_1})x_{j_2})\cdots)x_{j_m}))\cdots)
\]
or
\[
x_{i_1}(\cdots (x_{i_{r-1}}((\cdots((x_{i_r} x_{j_1}^{(t)})x_{j_2})\cdots)x_{j_m}))\cdots),
\]
respectively. By the inductive hypothesis, these monomials can be expressed using the operations $\succ$ and $\prec$.

Case 2: In (\ref{bicom-monomial}), suppose the generator $x_k^{(n)}$ appears. Without loss of generality, we can rewrite (\ref{bicom-monomial}) as
\begin{multline*}
x_{i_1}(\cdots (x_{i_{r-1}}((\cdots((x_{i_r} ^{(n)}x_{i_{r+1}})x_{i_{r+2}})\cdots)x_{i_{n+1}}))\cdots)=\\
x_{i_1}\prec(\cdots (x_{i_{r-1}}\prec((\cdots((x_{i_r}\succ x_{i_{r+1}})\succ x_{i_{r+2}})\cdots)\succ x_{i_{n+1}}))\cdots)\\
-\sum_{i_2',\ldots,i_{n+1}'<n} x_{i_1}(\cdots (x_{i_{r-1}}^{(i_{r-1}')}((\cdots((x_{i_r} ^{(i_{r}')}x_{i_{r+1}}^{(i_{r+1}')})x_{i_{r+2}}^{(i_{r+2}')})\cdots)x_{i_{n+1}}^{(i_{n+1}')}))\cdots).
\end{multline*}
All monomials in the summation correspond to case 1.
\end{proof}

\begin{corollary}
From Lemma \ref{lem:Weight-criterion} and Theorem \ref{bicom-1}, we obtain
\[
\Bi\Com\circ\Nov=\Bi\Com\otimes\Nov
\]
and
\[
\dim(\Der\Bi\Com(n))=(2^n-2)\binom{2n-2}{n-1}.
\]
\end{corollary}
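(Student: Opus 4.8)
The plan is to deduce both equalities by assembling the weight criterion just established in Theorem \ref{bicom-1} with the general machinery recalled in the introduction, and then to read off the dimension from the basis (\ref{bicom-monomial}).

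For the operadic identity $\Bi\Com\circ\Nov=\Bi\Com\otimes\Nov$, I would argue through dimensions in each arity. The white Manin product $\Bi\Com\circ\Nov=\Der\Bi\Com$ always embeds into $\Bi\Com\langle X^{(d)}\rangle$ via the functor $\tau$, with image the subalgebra generated by $\succ$ and $\prec$. Since $a\succ b=d(a)b$ and $a\prec b=ad(b)$ each send a pair of weight $-1$ elements to a weight $-1$ element, this image is contained in the weight $-1$ component of $\Bi\Com\langle X^{(d)}\rangle$, which is the arity-wise realization of the Hadamard product $\Bi\Com\otimes\Nov$. Hence $\dim(\Bi\Com\circ\Nov)(n)\le\dim(\Bi\Com\otimes\Nov)(n)$ for all $n$. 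Theorem \ref{bicom-1} asserts that every weight $-1$ monomial is in fact expressible through $\succ$ and $\prec$, so the embedding is surjective onto the weight $-1$ component; this furnishes the reverse inequality, of which Lemma \ref{lem:Weight-criterion} is the forward direction. The two dimensions therefore agree in every arity, and since the white Manin product is a suboperad of the Hadamard product, the two operads coincide.

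For the dimension formula, the Hadamard product is defined arity-wise as the tensor product of $S_n$-modules, so $\dim(\Bi\Com\otimes\Nov)(n)=\dim\Bi\Com(n)\cdot\dim\Nov(n)$, and it remains to compute the two factors. It is known that $\dim\Nov(n)=\binom{2n-2}{n-1}$; this is consistent with the associative case recalled in the introduction, where the weight criterion gives $\dim(\Der\As(n))=\dim\As(n)\cdot\dim\Nov(n)=n!\,\dim\Nov(n)=n!\binom{2n-2}{n-1}$. For the second factor $\dim\Bi\Com(n)=2^n-2$ (for $n\ge 2$), I would extract the count from the basis (\ref{bicom-monomial}): a multilinear monomial is determined by a splitting of $\{1,\dots,n\}$ into the left set $\{i_1,\dots,i_n\}$ and the right set $\{j_1,\dots,j_m\}$, each internally unordered by (\ref{lcom}) and (\ref{rcom}), and requiring the left set to be nonempty gives $2^n-1$ such splittings. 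The one subtlety is that the purely left-normed monomial (empty right set) coincides with the splitting obtained by moving its innermost factor into the right set, so exactly one pair of splittings is identified, leaving $2^n-2$ distinct monomials. Multiplying the two factors yields $\dim(\Der\Bi\Com(n))=(2^n-2)\binom{2n-2}{n-1}$.

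The main obstacle is this last count. One must verify that the basis (\ref{bicom-monomial}) produces exactly the single coincidence described above and no further collapses, i.e.\ that the ``core'' of a monomial is interchangeable with the left multipliers only through that one degeneracy; this is precisely what turns the naive total $2^n-1$ into $2^n-2$. Once this bookkeeping is pinned down, the remainder is a direct assembly of Theorem \ref{bicom-1}, the equivalence underlying Lemma \ref{lem:Weight-criterion}, and the Hadamard dimension formula.
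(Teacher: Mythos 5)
Your argument is correct and matches what the paper leaves implicit: the corollary is stated without an explicit proof, and the intended justification is exactly your assembly of Theorem \ref{bicom-1} (surjectivity of the image of $\succ,\prec$ onto the weight $-1$ component) with the suboperad inclusion $\Bi\Com\circ\Nov\subseteq\Bi\Com\otimes\Nov$ and the arity-wise count $\dim\Bi\Com(n)\cdot\dim\Nov(n)=(2^n-2)\binom{2n-2}{n-1}$. The only cosmetic difference is in computing $\dim\Bi\Com(n)$: the basis (\ref{bicom-monomial}) already requires both index blocks to be nonempty, so the count is directly the number of ordered splittings of $\{1,\dots,n\}$ into two nonempty parts, namely $2^n-2$, without your empty-right-set-plus-one-identification bookkeeping (which nevertheless arrives at the same number).
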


\section{Alternative algebra}
An algebra is called an alternative if it satisfies the following identities:
\begin{equation}\label{lasym}
(ab)c-a(bc)=-(ba)c+b(ac)
\end{equation}
and
\begin{equation}\label{rasym}
(ab)c-a(bc)=-(ac)b+a(cb).
\end{equation}

\begin{theorem}\label{alt-1}
Every monomial of weight $-1$ in $\Alt\<X^{(d)}\>$ can be expressed by the operations $\prec$ and $\succ$.
\end{theorem}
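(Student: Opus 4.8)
The plan is to follow the inductive scheme of Theorem~\ref{bicom-1}, inducting on the length $n$ of a weight~$-1$ monomial $w\in\Alt\<X^{(d)}\>$; recall that weight~$-1$ with $n$ leaves forces exactly $n-1$ derivatives, and that each of $\succ$ and $\prec$ raises the derivative count by one while preserving weight~$-1$, so these operations reach precisely the right class of monomials. The base cases $n=2$ and $n=3$ I would dispatch by direct computation: $x'y=x\succ y$, $xy'=x\prec y$, and for $n=3$ the number of weight~$-1$ monomials to treat is cut down by the linearized forms of (\ref{lasym}) and (\ref{rasym}), which together say that the associator $(a,b,c)=(ab)c-a(bc)$ is skew-symmetric in all three arguments. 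I would list a set of representatives modulo this skew-symmetry and give each an explicit $\succ/\prec$ expression, verifying it by expanding $d$ through the Leibniz rule, exactly as in the bicommutative $n=3$ calculation.

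For the inductive step I would reuse the two-case split. The clean building blocks are the pure chains: $a_1\succ(a_2\succ(\cdots\succ a_n))$ expands, with no Leibniz spillover, to the single right-nested monomial carrying one derivative on each of $a_1,\dots,a_{n-1}$ and none on $a_n$, and the left-normed $\prec$-chain gives the mirror monomial. \emph{Case 1}: if at least two leaves of $w$ are differentiated, I would use (\ref{lasym}) and (\ref{rasym}) to move all but one differentiated leaf to the outside of a proper sub-monomial $B$, chosen so that $B$ is itself a weight~$-1$ monomial of strictly smaller length; the inductive hypothesis expresses $B$, and the outer differentiated factors are re-attached by $\succ$ and $\prec$. \emph{Case 2}: if a single leaf carries all $n-1$ derivatives, I would realize $w$ as the leading Leibniz term of a mixed $\succ/\prec$ chain built around that leaf, so that $w$ equals the chain minus a sum of monomials in which the derivatives are spread over at least two leaves; those are Case~1 monomials of the same length $n$, and since Case~1 was already reduced to strictly smaller length there is no circularity.

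The step I expect to be the genuine obstacle is the rebracketing required in both cases. In the bicommutative setting the generators inside the normal form (\ref{bicom-monomial}) are freely rearrangeable, so isolating a weight~$-1$ inner block and peeling off a corner are immediate; in the alternative setting every rebracketing produces associator correction terms, and I must show that these terms are again weight~$-1$ monomials of length at most $n$ whose differentiated-leaf pattern keeps them within Case~1 (hence within the inductive hypothesis), never escaping to a larger length or regenerating a fresh single-high-derivative leaf. The two tools I would use to control this bookkeeping are the skew-symmetry of the associator, which lets me permute and cancel correction terms, and the classical theorem of Artin, by which every subalgebra of an alternative algebra generated by two elements is associative: whenever a move takes place inside a two-generated block the associators vanish outright, reducing that part of the argument to the already-known associative weight criterion (cf.~\cite{erlagol2021,DauSar}). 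Checking that this reduction closes---that the associator terms can always be absorbed into shorter weight~$-1$ monomials---is the technical heart of the proof, after which the conclusion follows as in Theorem~\ref{bicom-1}.
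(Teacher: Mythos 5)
Your inductive skeleton --- induction on length, explicit $n=3$ base cases, and the split into ``at least two differentiated leaves'' versus ``all derivatives on one leaf, realized as the leading Leibniz term of a $\succ/\prec$-chain'' --- is the same as the paper's. But you have correctly identified, and then not filled, the actual gap: the control of the associator correction terms produced by rebracketing. The paper closes this gap with concrete machinery that your proposal lacks: Lemma~\ref{lemma1} (any product can be rewritten so that \emph{both} factors contain a differentiated generator), Lemma~\ref{lemma2} (every monomial in $\Alt\<X\>$ is a sum of monomials $O_{x_{i_1}}\cdots O_{x_{i_{n-1}}}x_{i_n}$, a composition of left/right multiplication operators applied to a single core generator), and Lemma~\ref{lemma3} (the core generator of such an operator word can be moved to any prescribed leaf). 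It is Lemmas~\ref{lemma2} and~\ref{lemma3} that replace the ``freely rearrangeable'' normal form~(\ref{bicom-monomial}) of the bicommutative case; without some such normal form, your Case~1 step of ``choosing a proper sub-monomial $B$ that is itself a weight $-1$ monomial of strictly smaller length'' is not available --- a generic bracketing of a weight $-1$ monomial need not contain any weight $-1$ proper sub-block, and producing one is exactly what the rewriting lemmas are for.

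The two tools you propose for the bookkeeping do not substitute for this. Skew-symmetry of the associator is indeed the engine (identities~(\ref{lasym}) and~(\ref{rasym}) are used throughout), but by itself it only tells you that each rebracketing costs two extra terms; it does not organize the rewriting into a terminating normal-form computation. Artin's theorem is of no use here at all: the monomials under consideration are multilinear in $n$ distinct generators, so for $n\geq 3$ essentially no rebracketing takes place inside a two-generated subalgebra, and the reduction to the associative weight criterion never applies. Since you explicitly defer ``checking that this reduction closes,'' and the tools you name cannot close it, the proposal as written is incomplete at precisely the step that constitutes the proof.
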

\begin{proof}
Let us prove this by induction on the length of monomials. For $n=2$, the result is obvious. For $n=3$, it suffices to prove the result for the monomials
\[
x_1(x_2'x_3'),\;x_1'(x_2x_3'),\;x_1'(x_2'x_3),\;x_1''(x_2x_3),\;x_1(x_2''x_3),\;x_1(x_2x_3''),
\]
and for right normed monomials, the calculations are the same. By (\ref{lasym}) and (\ref{rasym}), we have
\[
x_1(x_2'x_3')=(x_1x_2')x_3'+(x_2'x_1)x_3'-x_2'(x_1x_3')=(x_1\prec x_2)\prec x_3+(x_2\succ x_1)\prec x_3-x_2\succ(x_1\prec x_3),
\]
\[
x_1'(x_2x_3')=x_1\succ(x_2\prec x_3),\; x_1'(x_2'x_3)=x_2\succ(x_1\succ x_3),
\]
\begin{multline*}
x_1''(x_2x_3)=(x_1''x_2)x_3+(x_2x_1'')x_3-x_2(x_1''x_3)=\\
(x_1\succ x_2)\succ x_3-x_1\succ(x_2\succ x_3)+(x_1\succ x_3)\prec x_2-x_1\succ(x_3\prec x_2)\\
+(x_2\prec x_1)\succ x_3-x_2\succ(x_1\succ x_3)+(x_2\succ x_3)\prec x_1-x_2\succ(x_3\prec x_1)\\
-x_2\prec(x_1\succ x_3)+(x_2\prec x_1)\prec x_3+(x_1\succ x_2)\prec x_3-x_1\succ(x_2\prec x_3)
\end{multline*}
\[
x_1(x_2''x_3)=x_1\prec(x_2\succ x_3)-(x_1\prec x_2)\prec x_3-(x_2\succ x_1)\prec x_3+x_2\succ(x_1\prec x_3),
\]
and
\[
x_1(x_2x_3'')=x_1\prec(x_2\prec x_3)-(x_1\prec x_2)\prec x_3-(x_2\succ x_1)\prec x_3+x_2\succ(x_1\prec x_3).
\]
Suppose that the result holds for all monomials of length at most $n-1$. To prove the result for an arbitrary monomial of length $n$, we consider $\mathcal{M}_n=(\mathcal{M}_k)(\mathcal{M}_{n-k})$, where $\mathcal{M}_i$ is an arbitrary monomial of length $i$. Without loss of generality, we may assume that $\wt(\mathcal{M}_k)\leq -1$. If $\wt(\mathcal{M}_k)=-1$, then the result is obvious by the inductive hypothesis. So, we may assume that $\wt(\mathcal{M}_k)<-1$. To prove the main result, we first prove three supporting lemmas.
\begin{lemma}\label{lemma1}
If $\mathcal{M}_n=(\mathcal{M}_k)(\mathcal{M}_{n-k})$, then $\mathcal{M}_n$ can be rewritten as a sum of monomials, each containing at least one generator with a derivation in every term of the decomposition.
\end{lemma}
\begin{proof}
If both $\mathcal{M}_{k}$ and $\mathcal{M}_{n-k}$ each contain at least one generator with a derivation, then the result follows. Suppose that this is not the case, that is, neither $\mathcal{M}_k$ nor $\mathcal{M}_{n-k}$ contains a generator with a derivation. Then
\begin{multline*}
(\mathcal{M}_k)(\mathcal{M}_{n-k})=(\mathcal{M}_k)((\mathcal{M}_{p})(\mathcal{M}_{n-k-p}))=^{(\ref{lasym})}\\
((\mathcal{M}_k)(\mathcal{M}_{p}))(\mathcal{M}_{n-k-p})+((\mathcal{M}_{p})(\mathcal{M}_k))(\mathcal{M}_{n-k-p})-(\mathcal{M}_{p})((\mathcal{M}_k)(\mathcal{M}_{n-k-p})),
\end{multline*}
i.e., if $\mathcal{M}_k$ does not contain a generator with a derivation, and both $\mathcal{M}_{p}$ and $\mathcal{M}_{n-k-p}$ contain at least one generator with a derivation, then using {(\ref{lasym})}, we can rewrite $(\mathcal{M}_k)(\mathcal{M}_{n-k})$ as a sum of monomials, each containing at least one generator with a derivation in every term of the decomposition. Analogically, we can use (\ref{rasym}) to repeat the same trick if $\mathcal{M}_{n-k}$ does not contain a generator with derivation:
\begin{multline*}
(\mathcal{M}_k)(\mathcal{M}_{n-k})=((\mathcal{M}_p)(\mathcal{M}_{k-p}))(\mathcal{M}_{n-k})=^{(\ref{rasym})}\\
(\mathcal{M}_p)((\mathcal{M}_{k-p})(\mathcal{M}_{n-k}))-((\mathcal{M}_p)(\mathcal{M}_{n-k}))(\mathcal{M}_{k-p})+(\mathcal{M}_p)((\mathcal{M}_{n-k})(\mathcal{M}_{k-p})).
\end{multline*}
By repeating this process several times, we can finally rewrite $\mathcal{M}_n$ as a sum of monomials, each containing at least one generator with a derivation in every term of the decomposition.
\end{proof}

\begin{lemma}\label{lemma2}
Any monomial in $\Alt\<X\>$ can be rewritten as a sum of monomials of the form 
\[
O_{x_{i_1}}O_{x_{i_2}}\cdots O_{x_{i_{n-1}}}x_{i_n},
\]
where $O_x$ denotes either a left or right multiplication operator.
\end{lemma}
\begin{proof}
We prove it by induction on the length of monomials. The result is obvious for monomials of length $2$ and $3$.

For an arbitrary monomial $\mathcal{M}_n=(\mathcal{M}_k)(\mathcal{M}_{n-k})$, by inductive hypothesis, we may assume that both $\mathcal{M}_k$ and $\mathcal{M}_{n-k}$ satisfy the condition of the lemma. Furthermore, for $\mathcal{M}_{n-k}=(\mathcal{M}_{p})(\mathcal{M}_{n-k-p})$, we may assume that $\mathcal{M}_{n-k-p}=x_j$. Applying $(\ref{lasym})$, we obtain
\[
(\mathcal{M}_k)(\mathcal{M}_{n-k})=(\mathcal{M}_k)((\mathcal{M}_{p})x_j)=
((\mathcal{M}_k)(\mathcal{M}_{p}))x_j+((\mathcal{M}_{p})(\mathcal{M}_k))x_j-(\mathcal{M}_{p})((\mathcal{M}_k)x_j).
\]
By inductive hypothesis, the monomials $((\mathcal{M}_k)(\mathcal{M}_{p}))x_j$ and $((\mathcal{M}_{p})(\mathcal{M}_k))x_j$ satisfy the condition of the lemma. Analogically, by repeatedly applying (\ref{lasym}) and (\ref{rasym}) to $(\mathcal{M}_{p})((\mathcal{M}_k)x_j)$, we reduce the length of $\mathcal{M}_{p}$ and finally obtain the result.
\end{proof}

By Lemma \ref{lemma1}, we may assume that if $\mathcal{M}_n=(\mathcal{M}_k)(\mathcal{M}_{n-k})$ then both $\mathcal{M}_k$ and $\mathcal{M}_{n-k}$ contain at least one generator with a derivation.

\begin{lemma}\label{lemma3}
The monomial
\[
O_{x_{i_1}}O_{x_{i_2}}\cdots O_{x_{i_{n-1}}}x_{i_n},
\]
in $\Alt\<X\>$ can be written as a sum of monomials of the form
\[
O_{x_{j_1}}O_{x_{j_2}}\cdots O_{x_{j_{n-2}}} O_{x_{j_{n-1}}}x_{i_1}.
\]
\end{lemma}
\begin{proof}
We prove it by induction on the length of monomials. The result is obvious for monomials of length $2$. For $R_{x_{i_1}}O_{x_{i_2}}x_{i_3}$ and $L_{x_{i_1}}O_{x_{i_2}}x_{i_3}$, it suffices to apply (\ref{rasym}) and (\ref{lasym}), respectively. Similarly, for monomials
\[
R_{x_{i_1}}O_{x_{i_2}}\cdots O_{x_{i_{n-1}}}x_{i_n}\;\;
\textit{and}\;\;
L_{x_{i_1}}O_{x_{i_2}}\cdots O_{x_{i_{n-1}}}x_{i_n},
\]
we apply (\ref{rasym}) and (\ref{lasym}), respectively, along with the inductive hypothesis.
\end{proof}

Now, we are ready to prove the result. Let us consider 2 cases.

Case 1: In $\mathcal{M}_n$, suppose that there are at least two generators with nonzero derivations. 
If $\mathcal{M}_n=(\mathcal{M}_k)(\mathcal{M}_{n-k})$, then by Lemma \ref{lemma1}, we can assume that both $\mathcal{M}_{k}$ and $\mathcal{M}_{n-k}$ each contain at least one generator with a derivation. Without loss of generality, we may assume that $\wt(\mathcal{M}_k)<-1$. As above, for $\mathcal{M}_k=(\mathcal{M}_{k_1})(\mathcal{M}_{k-k_1})$ both $\mathcal{M}_{k_1}$ and $\mathcal{M}_{k-k_1}$ each contain at least one generator with a derivation, and $\wt(\mathcal{M}_{k_1})<-1$. We repeat this process up to a monomial $\mathcal{M}_{k_l}$ such that $\wt(\mathcal{M}_{k_1})<-1$ and it has only one generator with derivation. By Lemma \ref{lemma1} and Lemma \ref{lemma2}, such $\mathcal{M}_{k_1}$ can be rewritten as a sum of monomials
\[
O_{x_{j_1}}\cdots O_{x_{j_{r-1}}}x^{(r)}_{j_r},
\]
that can be expressed by operations $\succ$ and $\prec$ by inductive hypothesis.

Case 2: In $\mathcal{M}_n$, suppose that the generator $x_k^{(n)}$ appears. By Lemma \ref{lemma2}, we can express $\mathcal{M}_n$ as a sum of monomials
\[
O_{x_{i_1}}\cdots O_{x^{(n)}_{i_t}}\cdots O_{x_{i_{n-1}}}x_{i_n},
\]
and by Lemma \ref{lemma3}, we can express it as a sum of monomials
\[
O_{x_{j_1}}\cdots O_{x_{j_{n-1}}}x^{(n)}_{i_t}.
\]
If $O_{x_p}y=R_{x_p}y$, then we rewrite it as $y\succ x_p$. If $O_{x_p}y=L_{x_p}y$, then we rewrite it as $x_p\prec y$. Finally, we the monomial $O_{x_{j_1}}\cdots O_{x_{j_{n-1}}}x^{(n)}_{i_t}
$ in terms of $\succ$ and $\prec$ and remained monomials correspond to Case 1.

\end{proof}

\begin{corollary}
From Lemma \ref{lem:Weight-criterion} and Theorem \ref{alt-1}, we obtain
\[
\Alt\circ\Nov=\Bi\Com\otimes\Nov.
\]
\end{corollary}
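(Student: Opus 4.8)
The plan is to derive the corollary from Theorem~\ref{alt-1} by the same formal route used for the bicommutative corollary, so that all the genuine content sits in Theorem~\ref{alt-1} and only a dimension count remains. First I would read off what Theorem~\ref{alt-1} provides: since every weight $-1$ monomial of $\Alt\langle X^{(d)}\rangle$ is expressible through $\succ$ and $\prec$, the variety $\Alt$ satisfies the weight criterion, i.e. the image of the functor $\tau$ is exactly the weight $-1$ component of $\Alt\langle X^{(d)}\rangle$. I would then recall from \cite{KSO2019} that $\Der\Alt\langle X\rangle$, the free algebra over the white Manin product $\Alt\circ\Nov$, always embeds into $\Alt\langle X^{(d)}\rangle$ through $\tau$ with image inside the weight $-1$ component; at the operad level this is the canonical inclusion $\Alt\circ\Nov\subseteq\Alt\otimes\Nov$ valid for every binary quadratic operad. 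Theorem~\ref{alt-1} says precisely that this embedding hits the whole weight $-1$ component, so the inclusion is surjective.

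Next I would convert surjectivity into the operadic identity by comparing dimensions. As an ordinary algebra, $\Alt\langle X^{(d)}\rangle$ is the free alternative algebra on the alphabet $\{x^{(j)}:x\in X,\ j\ge 0\}$, so its multilinear degree-$n$ component has as a basis the $\Alt\langle X\rangle$-basis monomials with each generator $x_i$ carrying a derivative order $j_i\ge 0$. The weight $-1$ condition reads $\sum_i(j_i-1)=-1$, i.e. $\sum_i j_i=n-1$, and the number of such distributions is $\binom{2n-2}{n-1}=\dim\Nov(n)$, independently of the underlying $\Alt$-monomial. Hence the weight $-1$ component has dimension $\dim\Alt(n)\,\binom{2n-2}{n-1}=\dim\Alt(n)\,\dim\Nov(n)$, which is exactly $\dim(\Alt\otimes\Nov)(n)$. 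Combining the a priori inclusion $\Alt\circ\Nov\subseteq\Alt\otimes\Nov$ with the surjectivity supplied by Theorem~\ref{alt-1} forces equality of dimensions in every arity, and since the inclusion is a monomorphism of operads this upgrades to the asserted coincidence of the white Manin product with the Hadamard product. This is the converse direction of the equivalence packaged in Lemma~\ref{lem:Weight-criterion}, which I would cite (or reprove) in that direction.

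The main obstacle is not any single hard computation but the careful bookkeeping that turns Theorem~\ref{alt-1} into an operad isomorphism: one must check that the decorated-monomial set is genuinely a basis of $\Alt\langle X^{(d)}\rangle$ (so that no alternative relation collapses the weight $-1$ slice), and then match the resulting arity-wise count $\dim\Alt(n)\,\dim\Nov(n)$ against the right-hand side of the displayed identity, which is what pins down the Hadamard factor appearing there. The upgrade from surjectivity to an isomorphism also relies on $\Alt\circ\Nov$ sitting inside $\Alt\otimes\Nov$ as a suboperad rather than as a quotient; once this inclusion direction is confirmed, equal dimensions give the result and the rest is the routine transfer from the free-algebra statement to the operadic one.
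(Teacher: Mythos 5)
Your route is essentially the intended one: the paper offers no argument for this corollary beyond the citation, and the genuine content is exactly what you reconstruct, namely the converse of Lemma~\ref{lem:Weight-criterion} --- the KSO2019 embedding $\tau\colon\Der\Alt\langle X\rangle\hookrightarrow\Alt\langle X^{(d)}\rangle$ with image in weight $-1$, surjectivity onto the weight $-1$ slice from Theorem~\ref{alt-1}, and the arity-wise count $\dim\Alt(n)\binom{2n-2}{n-1}=\dim(\Alt\otimes\Nov)(n)$, which is valid because the decorated generators $x^{(j)}$ are genuinely free generators of $\Alt\langle X^{(d)}\rangle$, so the weight condition $\sum_i j_i=n-1$ contributes an independent factor of $\binom{2n-2}{n-1}=\dim\Nov(n)$. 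You are also right that Lemma~\ref{lem:Weight-criterion} as printed gives only the implication (product equality $\Rightarrow$ weight criterion), so the direction actually needed here must be taken from \cite{KolMashSar} or reproved, as you do.

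One justification in your write-up is wrong as stated, though it does not sink the proof: there is \emph{no} ``canonical inclusion $\Var_1\circ\Var_2\subseteq\Var_1\otimes\Var_2$ valid for every binary quadratic operad.'' In general one only has a canonical morphism from the white product to the Hadamard product; by the quadratic construction it is injective in arities $\le 3$, but in arity $\ge 4$ it may acquire a kernel, since the suboperad of $\Var_1\otimes\Var_2$ generated by the binary part can satisfy relations not generated by the quadratic ones. For white products with $\Nov$ specifically, injectivity is precisely the content of the embedding theorem of \cite{KSO2019}, which you cite; your argument should lean on that theorem rather than on a purported general operadic fact, and with that substitution the dimension comparison does upgrade surjectivity to the operad identity. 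Finally, note that the corollary as printed contains a typo --- the right-hand side should read $\Alt\otimes\Nov$, not $\Bi\Com\otimes\Nov$ --- and it is this corrected statement that your argument establishes.
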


\begin{corollary}\label{assos-1}
The analog of Theorem \ref{alt-1} holds for $\Assos\<X^{(d)}\>$
and
\[
\dim(\Der\Assos(n))=(n!+2^n-\binom{n+1}{2}-1)\binom{2n-2}{n-1}.
\]
\end{corollary}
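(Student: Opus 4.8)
The plan is to handle the two assertions separately: first the weight criterion for $\Assos$, then the dimension count. For the weight criterion I would repeat the proof of Theorem~\ref{alt-1} almost verbatim. The defining identities of an assosymmetric algebra say that the associator $(ab)c-a(bc)$ is symmetric in its three arguments, i.e.
\[
(ab)c-a(bc)=(ba)c-b(ac),\qquad (ab)c-a(bc)=(ac)b-a(cb),
\]
which differ from (\ref{lasym}) and (\ref{rasym}) only in the signs on the right-hand sides. Since every manipulation in the proof of Theorem~\ref{alt-1} consists of expanding an associator and regrouping, these sign changes do not alter the combinatorial structure of the argument, only the coefficients of the resulting terms.

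Concretely, I would restate and reprove the three supporting lemmas with the assosymmetric identities substituted for (\ref{lasym}) and (\ref{rasym}). The analog of Lemma~\ref{lemma1} -- that $\mathcal{M}_n=(\mathcal{M}_k)(\mathcal{M}_{n-k})$ can be rewritten as a sum of monomials each carrying a derivation in every factor -- follows from the same recursion on the length of the derivation-free factor. The analogs of Lemma~\ref{lemma2} (reduction to left/right-normed monomials $O_{x_{i_1}}\cdots O_{x_{i_{n-1}}}x_{i_n}$) and Lemma~\ref{lemma3} (moving a prescribed generator to the rightmost slot) go through as well, since both rely only on solving the associator for $a(bc)$ in terms of products with the outer multiplication applied on the right, a feature shared by the symmetric and antisymmetric cases. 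With these in hand, the two-case analysis (at least two derivatives present; a single $(n)$-th derivative present) copies the one in Theorem~\ref{alt-1}, establishing that every weight $-1$ monomial in $\Assos\<X^{(d)}\>$ is expressible through $\succ$ and $\prec$.

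For the dimension formula I would invoke Lemma~\ref{lem:Weight-criterion}: the weight criterion just proved yields $\Assos\circ\Nov=\Assos\otimes\Nov$, and by definition $\Der\Assos=\Assos\circ\Nov$. Because the Hadamard product is taken componentwise as $S_n$-modules,
\[
\dim(\Der\Assos(n))=\dim(\Assos(n))\cdot\dim(\Nov(n)).
\]
The Novikov factor is $\dim(\Nov(n))=\binom{2n-2}{n-1}$, read off from the associative case: there $\dim(\Der\As(n))=n!\binom{2n-2}{n-1}=\dim(\As(n))\dim(\Nov(n))$ with $\dim(\As(n))=n!$. It then remains to insert the dimension of the assosymmetric operad, $\dim(\Assos(n))=n!+2^n-\binom{n+1}{2}-1$, which is the known count of \cite{assos1,assos2}. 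Multiplying the two factors produces the stated formula.

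The main obstacle is not conceptual but a matter of bookkeeping: one must check that each sign flip in the assosymmetric identities is absorbed harmlessly in the inductive rewriting, so that no term of weight $\ne -1$ or of uncontrolled length is introduced when the associator is expanded. Once the three lemmas are verified with the corrected signs, both the weight criterion and, through the Hadamard product, the dimension count follow immediately.
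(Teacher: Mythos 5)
Your proposal follows essentially the same route as the paper, whose entire proof is the remark that the argument for $\Assos\<X^{(d)}\>$ is analogous to that for $\Alt\<X^{(d)}\>$; you correctly identify that the assosymmetric identities differ from (\ref{lasym}) and (\ref{rasym}) only in signs, so the three supporting lemmas and the two-case induction carry over. Your derivation of the dimension formula as $\dim(\Assos(n))\cdot\dim(\Nov(n))$ via the componentwise Hadamard product is also the intended (though unstated in the paper) computation, so the proposal is correct and in fact more explicit than the paper's own proof.
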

\begin{proof}
The proof of result for $\Assos\<X^{(d)}\>$ is analogical as for $\Alt\<X^{(d)}\>$.
\end{proof}

\section{Application}

The explicit calculation technique of $\Var_1\circ\Var_2$ is given in \cite{KSO2019}. Using this technique, we obtain the defining identities of operads $\Der\Alt$, $\Der\Assos$ and $\Der\Bi\Com$.

\begin{proposition}
The white Manin product of the operads $\Alt$ and $\Nov$ gives an operad $\Der\Alt$ which is defined by the following identities:
\[
(a\succ b)\prec c-a\succ(b\prec c)=-(c\succ b)\prec a+c\succ(b\prec a)
\]
and
\begin{multline*}
(a\prec b)\succ c-a\succ(b\succ c)+(a\succ c)\prec b-a\succ(c\prec b)-a\prec(b\succ c)+(a\prec b)\prec c\\
+(b\succ a)\prec c-b\succ(a\prec c)=
-(c\prec b)\succ a+c\succ(b\succ a)-(c\succ a)\prec b+c\succ(a\prec b)\\
+c\prec(b\succ a)-(c\prec b)\prec a-(b\succ c)\prec a+b\succ(c\prec a).
\end{multline*}
\end{proposition}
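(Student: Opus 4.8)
The plan is to realize the explicit technique of \cite{KSO2019} through the embedding $\tau\colon\Der\Alt\<X\>\hookrightarrow\Alt\<X^{(d)}\>$ together with the dictionary $a\succ b=d(a)b$ and $a\prec b=a\,d(b)$. By Theorem \ref{alt-1} and Lemma \ref{lem:Weight-criterion}, $\tau$ identifies $\Der\Alt\<X\>$ with the weight $-1$ subspace of $\Alt\<X^{(d)}\>$, so the whole statement is an arity-$3$ assertion: since $\Der\Alt=\Alt\circ\Nov$ is quadratic, its defining relations lie among the arity-$3$ monomials in $\succ$ and $\prec$. First I would translate every such monomial into $\Alt\<X^{(d)}\>$ via the dictionary, for instance $(a\succ b)\prec c\mapsto(a'b)c'$ and $a\succ(b\prec c)\mapsto a'(bc')$, noting that mixed nodes such as $(a\prec b)\succ c\mapsto(a'b'+ab'')c$ introduce second derivatives. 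Reducing each image in the monomial basis of the weight $-1$ part recorded in the proof of Theorem \ref{alt-1} turns the problem into computing the kernel of this translation map, and the defining identities are exactly a set of generators of that kernel.

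For validity I would derive the two displayed identities directly from \eqref{lasym} and \eqref{rasym}. The first identity is simply the alternating property of the associator $(x,y,z):=(xy)z-x(yz)$, which is forced by \eqref{lasym} (antisymmetry in the first two arguments) and \eqref{rasym} (antisymmetry in the last two) and hence gives $(x,y,z)+(z,y,x)=0$; specializing at $x=d(a)$, $y=b$, $z=d(c)$ and applying the dictionary yields precisely $(a\succ b)\prec c-a\succ(b\prec c)=-(c\succ b)\prec a+c\succ(b\prec a)$. The second, longer identity should arise by differentiation: starting from a weight $-2$ instance of \eqref{lasym} (equivalently, applying $d$ twice to the plain alternative law), expand by the Leibniz rule and rewrite the resulting weight $-1$ expression — including the double-derivative terms $x''$ — in terms of $\succ$ and $\prec$ using the explicit degree-$3$ reductions already carried out in the proof of Theorem \ref{alt-1}. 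Checking the second identity then amounts to verifying that, after translation, its two sides reduce to the same element of $\Alt\<X^{(d)}\>$ modulo \eqref{lasym} and \eqref{rasym}; tracking the $x''$-contributions through this reduction is where the bulk of the routine labor sits.

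The essential point, and the main obstacle, is \emph{completeness}: proving that the operadic ideal generated by these two identities accounts for \emph{all} relations, not merely a subset. Here I would invoke the Hadamard identification $\Alt\circ\Nov=\Alt\otimes\Nov$ from the preceding corollary, which gives $\dim\Der\Alt(3)=\dim\Alt(3)\cdot\dim\Nov(3)$. One computes $\dim\Nov(3)=\binom{4}{2}=6$ (consistent with $\dim\Der\As(n)=n!\binom{2n-2}{n-1}$ and $\dim\As(n)=n!$) and $\dim\Alt(3)=7$, so $\dim\Der\Alt(3)=42$. Since the free operad on the two binary generators $\succ,\prec$ has dimension $48$ in arity $3$, the space of relations must be exactly $6$-dimensional. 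It therefore suffices to confirm that the two displayed identities, viewed as $\mathbb{S}_3$-module generators, span a $6$-dimensional relation space — equivalently, that the rank of the translation map has the matching codimension. Establishing this rank equality is the crux: once the dimension count closes, no independent relation has been omitted, and the operad defined by the two identities is identified with $\Der\Alt$.
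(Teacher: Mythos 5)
The paper offers no actual proof of this proposition---it only points to the ``explicit calculation technique'' of \cite{KSO2019}---so your proposal is best judged as a plan for carrying out that computation, and as such it is essentially sound: the identification of the quadratic relation space with the kernel of the translation map $\tau$ in arity $3$ is legitimate (injectivity of $\Der\Alt\<X\>\to\Alt\<X^{(d)}\>$ comes from \cite{KSO2019}, surjectivity onto weight $-1$ from Theorem~\ref{alt-1}), your dimension count $\dim\Der\Alt(3)=\dim\Alt(3)\cdot\dim\Nov(3)=7\cdot 6=42$ against the $48$-dimensional free component is correct (note the corollary you invoke contains a typo, $\Bi\Com\otimes\Nov$ for $\Alt\otimes\Nov$, which you silently and correctly repair), and since the white product is quadratic by construction, exhibiting a spanning set of the $6$-dimensional kernel does finish the job. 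One simplification you missed: both identities, not just the first, are instances of the alternating property $(x,y,z)+(z,y,x)=0$ of the associator. Translating the second identity through the dictionary and cancelling the pairs $(d(a),d(b),c)+(d(a),c,d(b))$ and $(a,d(b),d(c))+(d(b),a,d(c))$ by \eqref{lasym} and \eqref{rasym} leaves exactly $(a,d^2(b),c)=-(c,d^2(b),a)$; this replaces your vaguer ``differentiate the alternative law twice'' step and makes the validity check immediate.

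The one genuinely unfinished step is the rank verification you yourself flag as the crux. It is a finite check, and the structure is favorable: each identity is invariant under the transposition $a\leftrightarrow c$, so each generates an $S_3$-submodule of dimension at most $3$ (a quotient of the module induced from the trivial representation of $S_2$), and you must confirm that both submodules are full and intersect trivially to reach the required total of $6$. Until that linear-algebra computation is actually performed, the completeness half of the proposition is asserted rather than proved---but the reduction is correct, and nothing in the plan would fail.
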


\begin{proposition}
The white Manin product of the operads $\Assos$ and $\Nov$ gives an operad $\Der\Assos$ which is defined by the following identities:
\[
(a\succ c)\prec b-a\succ(c\prec b)=(b\succ c)\prec a-b\succ(c\prec a)
\]
and
\begin{multline*}
(a\prec c)\succ b-a\succ(c\succ b)-(a\succ b)\prec c+a\succ(b\prec c)-a\prec(c\succ b)+(a\prec c)\prec b\\
-(c\succ a)\prec b+c\succ(a\prec b)=
(b\prec c)\succ a-b\succ(c\succ a)-(b\succ a)\prec c+b\succ(a\prec c)\\
-b\prec(c\succ a)+(b\prec c)\prec a-(c\succ b)\prec a+c\succ(b\prec a).
\end{multline*}
\end{proposition}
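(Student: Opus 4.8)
The plan is to apply the explicit computation of the white Manin product from \cite{KSO2019} to the assosymmetric operad, exactly as in the preceding proposition for $\Alt$. Recall that $\Der\Assos=\Assos\circ\Nov$ is again a binary quadratic operad whose two binary generators are $\succ$ and $\prec$, realized inside $\Assos\<X^{(d)}\>$ through the functor $\tau$ by $a\succ b=d(a)b$ and $a\prec b=ad(b)$. Under $\tau$ an arity-$3$ word in $\succ,\prec$ becomes a trilinear element of $\Assos\<X^{(d)}\>$ of weight $-1$ in the variables $a,b,c$, carrying two derivatives in total. Consequently the defining relations of $\Der\Assos$ in arity $3$ are precisely the trilinear weight-$(-1)$ consequences of the assosymmetric identities, rewritten in terms of $\succ$ and $\prec$; such a rewriting is always available by the weight criterion for $\Assos$ established in Corollary \ref{assos-1}.

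Concretely, I would write the assosymmetric law as the symmetry of the associator $(a,b,c):=(ab)c-a(bc)$ under the whole group $S_3$, and generate relations by substituting derivatives into its slots so as to reach total derivative-order $2$ (these substitutions are legitimate because the identity holds for \emph{all} elements of $\Assos\<X^{(d)}\>$, in particular for $d(a),d(b),d(c)$). The first displayed identity comes from the transposition of the two outer arguments applied to first derivatives on the first and third slots: by $S_3$-symmetry $(d(a),c,d(b))=(d(b),c,d(a))$, and expanding both associators while using $d(a)\,c=a\succ c$, $(a\succ c)\,d(b)=(a\succ c)\prec b$, $c\,d(b)=c\prec b$, $d(a)(c\prec b)=a\succ(c\prec b)$ turns this equality directly into
\[
(a\succ c)\prec b-a\succ(c\prec b)=(b\succ c)\prec a-b\succ(c\prec a).
\]
Here no further reduction is needed, since in every factor at most one argument is differentiated.

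The second identity is where the real work lies. It arises from the complementary derivative pattern that is still antisymmetric in $a\leftrightarrow b$, namely a \emph{second} derivative on the distinguished slot: substituting $c\mapsto d^2(c)$ into the transposition $(a,b,c)=(b,a,c)$ gives the weight-$(-1)$ relation $(a,b,d^2(c))=(b,a,d^2(c))$. Its naive expansion produces monomials such as $b\,d^2(c)$ of weight $0$, which are not themselves of the form $\succ$ or $\prec$. To finish one must push the entire weight-$(-1)$ expression into admissible $\succ/\prec$ form; this is guaranteed possible by the weight criterion (Corollary \ref{assos-1}), but must be carried out explicitly by repeatedly applying the assosymmetric relations, just as in the reductions behind Theorem \ref{alt-1}. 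The hard part is precisely this reduction: bookkeeping the many terms generated when a double-derivative monomial is legalized, and verifying that the process terminates at the stated right-hand side. I expect this to be the only genuinely laborious step.

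Finally I would confirm completeness by a dimension count. The free operad on the two binary operations $\succ,\prec$ has dimension $2\cdot 2^{2}\cdot 3!=48$ in arity $3$, while $\dim(\Der\Assos(3))=42$ follows from the formula in Corollary \ref{assos-1} (with $n=3$); hence the relation space in arity $3$ is $6$-dimensional. Each of the two displayed identities is antisymmetric under $a\leftrightarrow b$ with $c$ distinguished, so its $S_3$-orbit spans a $3$-dimensional subspace, and the two orbits together exhaust all $6$ dimensions. This shows that the two identities generate the full relation space and completes the argument; the assosymmetric case runs entirely parallel to the alternative case of the preceding proposition.
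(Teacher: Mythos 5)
Your route is sound and, as far as one can tell, it is the intended one: the paper offers no proof of this proposition beyond the sentence pointing to the explicit white-product computation of \cite{KSO2019}, so there is nothing line-by-line to compare against. What you do differently from that cited technique (which computes the arity-$3$ relation space of $\Var_1\circ\Var_2$ directly as a preimage of $R\otimes\mathcal F(3)+\mathcal F(3)\otimes S$ under the natural map of free operads) is to work inside the realization $\tau:\Der\Assos\<X\>\hookrightarrow\Assos\<X^{(d)}\>$: you check that each displayed identity is a weight-$(-1)$ consequence of the $S_3$-symmetry of the associator, then close the argument with a dimension count that uses $\Assos\circ\Nov=\Assos\otimes\Nov$. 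This buys a much more transparent derivation of where the identities come from — indeed, applying $\tau$ to the first identity gives exactly $(d(a),c,d(b))=(d(b),c,d(a))$, and applying it to the second gives, after the cross-terms cancel in pairs by associator symmetry, exactly $(a,d^2(c),b)=(b,d^2(c),a)$, confirming your description of their origin — at the price of leaning on the main theorem of the paper (Corollary~\ref{assos-1}) for $\dim\Der\Assos(3)=7\cdot 6=42$, which the direct computation of \cite{KSO2019} would not need. The numerology is right: $48-42=6$, and the orbit of the first identity is genuinely $3$-dimensional because its three $S_3$-translates have pairwise disjoint monomial supports.

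The one step you assert but do not verify is the linear independence of the six relations, i.e.\ that the orbit of the second identity is itself $3$-dimensional and meets the span of the first orbit trivially. Antisymmetry under $a\leftrightarrow b$ only bounds each orbit by $3$ from above; since the second identity involves $16$ monomials and its translates share monomial types with each other and with the first family (e.g.\ $(b\succ a)\prec c$ and $b\succ(a\prec c)$ occur in both), the independence is not automatic from support considerations and requires an explicit rank computation in the $48$-dimensional space. That is a finite, mechanical check, but without it your argument only shows that the two identities \emph{hold} in $\Der\Assos$, not that they \emph{define} it.
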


\begin{proposition}
The white Manin product of the operads $\Bi\Com$ and $\Nov$ gives an operad $\Der\Bi\Com$ which is defined by the following identities:
\[
(a\prec b)\prec c=(a\prec c)\prec b,
\]
\[
a\succ(b\succ c)=b\succ(a\succ c),
\]
\[
(a\succ b)\succ c-(a\succ c)\prec b=(a\succ c)\succ b-(a\succ b)\prec c
\]
and
\[
a\prec(b\prec c)-b\succ(a\prec c)=b\prec(a\prec c)-a\succ(b\prec c).
\]
\end{proposition}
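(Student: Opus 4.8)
The plan is to apply the explicit description of the white Manin product from \cite{KSO2019} to the two defining identities (\ref{lcom}) and (\ref{rcom}) of $\Bi\Com$. Recall that in $\Bi\Com^{(d)}$ the derived operations satisfy $d(ab)=d(a)b+a\,d(b)=a\succ b+a\prec b$, and, by the Corollary following Theorem \ref{bicom-1}, $\Bi\Com\circ\Nov=\Bi\Com\otimes\Nov$, so $\Der\Bi\Com\langle X\rangle$ embeds into $\Bi\Com^{(d)}\langle X\rangle$ via $\tau$ as the space of weight-$(-1)$ elements. Every degree-$3$ relation of $\Der\Bi\Com$ is therefore a linear dependence, holding in $\Bi\Com^{(d)}$, among the weight-$(-1)$ arity-$3$ monomials in $\succ$ and $\prec$; each such monomial carries exactly two single derivatives among its three arguments.

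First I would produce the candidate relations by translating the generating $\succ/\prec$-monomials through $\tau$ and reducing them to the normal form of $\Bi\Com^{(d)}$, namely the monomials (\ref{bicom-monomial}) of \cite{Ism} with derivatives attached to the generators. Two of the four identities are then immediate from the bicommutative laws in their purest form: the pattern $(a\,b')c'$ versus $(a\,c')b'$ is exactly (\ref{rcom}) and yields $(a\prec b)\prec c=(a\prec c)\prec b$, while $a'(b'c)$ versus $b'(a'c)$ is exactly (\ref{lcom}) and yields $a\succ(b\succ c)=b\succ(a\succ c)$.

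The remaining two identities are the mixed ones, and here the computation is heavier because $\tau$ of a composite such as $(a\succ b)\succ c$ expands by the Leibniz rule into $d(d(a)b)\,c=(a''b+a'b')c$, introducing second derivatives, whereas composites like $(a\succ c)\prec b$ reduce to $(a'c)b'$ with single derivatives only. I would expand every term of the two mixed identities this way, separate the second-derivative monomials from the single-derivative ones, and check that each grouping is forced to zero by (\ref{lcom}) and (\ref{rcom}) after reduction to normal form: in the third identity the $a''$-terms $(a''b)c$ and $(a''c)b$ cancel by (\ref{rcom}) while the single-derivative residue vanishes again by (\ref{rcom}), and symmetrically the fourth identity is governed entirely by (\ref{lcom}) via the $c''$-terms $a(bc'')$ and $b(ac'')$.

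Finally, to see that these four identities generate \emph{all} degree-$3$ relations rather than merely hold, I would compare dimensions: the quotient of the free operad on $\{\succ,\prec\}$ by the four relations must have the same arity-$3$ dimension as $\Der\Bi\Com(3)$, which by the Corollary formula equals $(2^3-2)\binom{4}{2}=36$. The main obstacle I expect is precisely this completeness step together with the bookkeeping in the mixed identities: verifying validity is routine but laborious, whereas ruling out additional independent relations requires either the exact arity-$3$ count above or a direct appeal to the general white-product presentation of \cite{KSO2019}, which guarantees that differentiating the bicommutative relations already produces a complete defining set.
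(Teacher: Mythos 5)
Your proposal is correct, and it is worth noting that the paper itself gives no written argument for this proposition beyond the pointer to the explicit white-product technique of \cite{KSO2019}; your route is therefore genuinely different and more self-contained. Where \cite{KSO2019} computes the relation space of $\Bi\Com\circ\Nov$ directly as a preimage inside the weight-graded free operad, you instead (i) verify that the four candidate identities hold by pushing them through $\tau$ into $\Bi\Com\<X^{(d)}\>$, and (ii) close the argument by a dimension count using $\Bi\Com\circ\Nov=\Bi\Com\otimes\Nov$. Both halves check out. For (i): the first two identities are literally (\ref{rcom}) applied to $(ab')c'$ and (\ref{lcom}) applied to $a'(b'c)$; in the third, $\tau((a\succ b)\succ c)=(a''b)c+(a'b')c$ and the $a''$-terms on the two sides agree by (\ref{rcom}), while $(a'b')c=(a'c)b'$ and $(a'c')b=(a'b)c'$ kill the residues, again by (\ref{rcom}); the fourth is the mirror computation with (\ref{lcom}) and the $c''$-terms. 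For (ii): the free operad on $\{\succ,\prec\}$ has arity-$3$ dimension $48$, the four identities generate (at most) a $12$-dimensional $S_3$-submodule since each is antisymmetric in two of its arguments, and $48-12=36=(2^3-2)\binom{4}{2}=\dim(\Der\Bi\Com(3))$; since the identities are already known to hold, equality of dimensions forces the relation spaces to coincide, and quadraticity of the white product then gives the presentation. The only item you should not leave implicit is the linear independence of the $12$ induced relations (equivalently, that the quotient's arity-$3$ component is exactly $36$-dimensional and not larger); this is a finite check, e.g.\ against the normal form (\ref{bicom-monomial}) of \cite{Ism} with derivatives attached. Your verify-and-count strategy buys transparency and checkability; the paper's cited technique buys generality, as it produces the defining identities without first knowing the Hadamard-product dimension formula.
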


\begin{theorem}
Any algebra of the variety of algebras $\Der\Var$ can be embedded into the appropriate algebra of the variety of algebras $\Var^d$, where $\Var=\Alt,\Assos,\Bi\Com$. 
\end{theorem}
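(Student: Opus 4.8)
The plan is to read the final statement as nothing more than the concrete speciality conclusion of Theorem~\ref{thm:Weight-Embedding}, now that its hypothesis has been secured for each of the three varieties at hand. Recall that a $\Der\Var$-algebra being \emph{special} means precisely that it embeds into an algebra of $\Var^{(d)}$ under the functor $\tau$ determined by $a\succ b=d(a)b$ and $a\prec b=ad(b)$; this is exactly the embedding claimed in the statement (with $\Var^{d}$ read as the variety $\Var^{(d)}$ of $\Var$-algebras equipped with a derivation). Hence the entire argument reduces to checking, for each $\Var=\Alt,\Assos,\Bi\Com$, that the operadic identity $\Var\circ\Nov=\Var\otimes\Nov$ required by Theorem~\ref{thm:Weight-Embedding} actually holds.

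First I would assemble the three weight-criterion results already established above: Theorem~\ref{bicom-1} for $\Bi\Com$, Theorem~\ref{alt-1} for $\Alt$, and Corollary~\ref{assos-1} for $\Assos$. Each asserts that every monomial of weight $-1$ in $\Var\<X^{(d)}\>$ is expressible through $\succ$ and $\prec$, i.e. that $\Var$ satisfies the weight criterion. Combined with Lemma~\ref{lem:Weight-criterion}, exactly as recorded in the corollaries to those theorems, this yields $\Bi\Com\circ\Nov=\Bi\Com\otimes\Nov$, $\Alt\circ\Nov=\Alt\otimes\Nov$, and $\Assos\circ\Nov=\Assos\otimes\Nov$. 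With the hypothesis of Theorem~\ref{thm:Weight-Embedding} now verified in all three cases, a single invocation of that theorem gives that every $\Der\Var$-algebra is special, which is precisely the asserted embedding.

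For a more self-contained route, or to exhibit the embedding explicitly, I would instead unwind the mechanism behind Theorem~\ref{thm:Weight-Embedding}. Given an arbitrary $\Der\Var$-algebra $A$, I would form its universal enveloping $\Var^{(d)}$-algebra $U(A)$ — morally the free $\Var$-algebra with derivation built on the underlying space of $A$, factored by the relations forcing $\tau$ to be a homomorphism — and consider the canonical map $A\to U(A)$. The equality $\Var\circ\Nov=\Var\otimes\Nov$, read through the weight grading, guarantees that the weight $-1$ component of $U(A)$ is spanned exactly by the $\tau$-images coming from $A$ with no collapse, so the canonical map is injective; this is how the free-case embedding $\Der\Var\<X\>\hookrightarrow\Var\<X^{(d)}\>$ of \cite{KSO2019} is promoted to arbitrary algebras. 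As a supplementary check one could confirm directly that the $\tau$-images in $\Var^{(d)}$ satisfy the explicit defining identities of $\Der\Var$ recorded in the three Propositions above, though this is subsumed by the abstract statement.

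The substantive work is therefore already behind us: the genuine obstacle was proving the weight criterion in Theorems~\ref{bicom-1} and~\ref{alt-1} and Corollary~\ref{assos-1}, since this is exactly what excludes the Zinbiel-type failure noted in the introduction, where weight $-1$ monomials need not lie in $\Der\Var\<X\>$. Once the weight criterion — equivalently, the Manin--Hadamard coincidence — is in place, the passage from the free $\Der\Var$-algebra to an arbitrary one is the routine universal-enveloping step, and the only remaining point demanding care is confirming that $\Var^{d}$ in the statement is literally $\Var^{(d)}$, so that \emph{special} and \emph{embeddable into $\Var^{d}$} coincide.
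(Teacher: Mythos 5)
Your proposal matches the paper's own proof: the paper likewise just cites the weight-criterion results for $\Alt$, $\Assos$, and $\Bi\Com$ together with Lemma \ref{lem:Weight-criterion} and Theorem \ref{thm:Weight-Embedding} to conclude speciality, i.e.\ the claimed embedding. Your supplementary sketch of the universal-enveloping mechanism goes beyond what the paper writes but is consistent with the cited machinery, so the core argument is the same.
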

\begin{proof}
Before, we proved that the weight criterion works for operads $\Alt$, $\Assos$ and $\Bi\Com$. By Lemma \ref{lem:Weight-criterion} and Theorem \ref{thm:Weight-Embedding}, we obtain the result.
\end{proof}

\subsection*{Acknowledgments}
The first author was supported by the Program of Fundamental Research RAS (project FWNF-2022-0002).

\end{document}